\theoremstyle{plain}
\newtheorem{theorem}{Theorem}[section]
\newtheorem{lemma}[theorem]{Lemma}
\newtheorem{remark}[theorem]{Remark}
\newtheorem{proposition}[theorem]{Proposition}
\numberwithin{equation}{section}
\newcommand{\N}{\mathbb{N}} 
\newcommand{\R}{\mathbb{R}} 
\newcommand{\B}{\mathbb{B}}
\newcommand{\DD}{\mathscr{D}}
\def\d{\delta}		 	  \def\e{\varepsilon}
\def\L{{\mathcal L}}
\def\({\left(}       \def\){\right)}
\newcommand{\eps}{\varepsilon}
\newcommand{\pe}[2]{\left\langle #1,#2 \right\rangle}
\newcommand{\<}{\left\langle} \renewcommand{\>}{\right\rangle}
\newcommand{\dvert}{\boldsymbol{\vert}\!\!\boldsymbol{\vert}}
\newcommand{\vnorm}[1]{\dvert #1\dvert}
\newcommand{\pnorm}[1]{\left|#1\right|}
\newcommand{\rnnorm}[1]{\left\|#1\right\|_\infty}
\DeclareMathOperator*{\Div}{div}
\DeclareMathOperator*{\Var}{var}
\DeclareMathOperator*{\supp}{supp}
\newcommand\restr[2]{{
  \left.\kern-\nulldelimiterspace 
  #1 
  \right|_{#2} 
}}
\newcommand{\ignore}[1]{}
\begin{document}

\title{Smirnov-decompositions of vector fields}

\author{Alberto Rodr\'iguez-Arenas}
\address{Instituto Universitario de Matem\'atica Pura y Aplicada,
 Universitat Polit\`ecnica de Val\`encia,
E-46022 Valencia, Spain}
\email{alrodar3@posgrado.upv.es}

\author{Jochen Wengenroth}
\address{Fachbereich IV -- Mathematik, Universit\"{a}t Trier, D-54286 Trier, Germany}
\email{wengenroth@uni-trier.de}

 \begin{abstract}
We give streamlined proofs of theorems of S.\ Smirnov about the decomposition of vector fields of measures into curves.
\end{abstract}

\maketitle




\section{Introduction}

An $n$-dimensional charge is a $\sigma$-additive function $\mu=(\mu_1,\ldots,\mu_n)$ from the Borel $\sigma$-algebra $\B^n$ to $\R^n$, 
the components of such a vector measure are thus real-valued signed measures. Every charge $\mu$ acts on \emph{test fields}
$\varphi=(\varphi_1,\ldots,\varphi_n)$ of bounded measurable functions as
\[
\<\mu,\varphi\> = \sum_{k=1}^n \int_{\R^n} \varphi_k d\mu_k.
\]
The aim of this note is to prove theorems of Smirnov \cite{Smi} characterizing charges which decompose as
\[
\mu=\int_{K_\ell} [\gamma] d\nu(\gamma)
\]
for a positive finite measure $\nu$ on the set $K_\ell$ of $1$-Lipschitz curves $\gamma:[0,\ell]\to\R^n$ where $[\gamma]$ is
the charge acting on test fields as the work done by the field along $\gamma$, 
\[
\<[\gamma],\varphi\>= \int_{[0,\ell]} \<\varphi(\gamma(t)),\dot\gamma(t)\> d\lambda^1(t).
\]
Such decompositions are crucial in geometric measure theory (charges are the \emph{1-currents} of order $0$) 
and in the theory of optimal transport
(where $\nu$ is a \emph{transport plan})
but combined with the Hahn-Banach theorem they also have direct applications in approximation theory. As an illustration we give a very simple proof
of a result of Havin and Smirnov that for a compact set $M$ like the Koch curve in which every rectifiable curve is constant and all 
$f,f_1,\ldots,f_n\in C(M)$ 
one can approximate the function $f$ uniformly on $M$ by some $\psi\in\DD(\R^n)$ such that $\partial_k\psi$ approximate $f_k$.

\medskip
During the first 10 years since its publication Smirnov's theorems did not find much attention. This changed when \cite{Smi} was mentioned by
Brezis and Bourgain in \cite{BoBr} and later on when the relevance for the theory of optimal transport was realized, see, e.g.,  Santambrogio's book
\cite{San}
(Smirnov's scientific laurels like the fields medal probably also boosted the interest in his early work).

Meanwhile, there are variants of Smirnov's theorems \cite{PaSt1,PaSt2} in the context of metric currents in the sense of Ambrosio and Kirchheim \cite{AmKi}
but the classical approach of Smirnov is still interesting. However, at least the authors of the present note had a very hard time
to understand the details of \cite{Smi} and we do not know of a streamlined presentation. In view of the importance of the results for
several mathematical fields we will thus try to give detailed comprehensible proofs in Smirnov's spirit.

The main ingredients are a version of Liouville's theorem that flows of divergence free vector fields are volume preserving,  approximation and
smoothing by convolution together with the Arzel\'a-Ascoli theorem, and a dimension trick to reduce to the divergence free case.

\section{Charges and curves}
We recall basic facts about charges which can be found, e.g., in Schechter's \emph{Handbook of Analysis and its Foundations} \cite[§29]{Sch}
(although the term \emph{charge} is used in a different sense there)
or for the case $n=2$ in Rudin's classic \emph{Real and Complex Analysis} \cite[chapter 6]{Rud}.

The variation measure of an $n$-dimensional  charge $\mu$ is defined for Borel sets $E\subseteq\R^n$ as
\[
   \vnorm{\mu}(E)=\sup \left\{\sum_{k\in\N}|\mu (E_k)|: E_k\subseteq E \text{ are pairwise disjoint Borel sets}\right\},
\]
the similarity of the symbols $\vnorm{\cdot}$ and the euclidean norm $|\cdot|$ should not cause confusion, Schechter denotes the variation measure
as $/\mu/$ whereas Smirnov writes $\|\mu\|$.
This is indeed a finite positive measure and $\Var(\mu)=\vnorm\mu (\R^n)$ is called the \emph{total variation} of $\mu$.

The Radon-Nikodym theorem yields a measurable density $f:\R^n\to\R^n$ with $|f(x)|=1$ for all $x\in\R^n$ and $\mu=f\cdot \vnorm\mu$, i.e.,
\[
    \mu (E)=\int _E f  d\vnorm{\mu}
\]
for all $E\in\B^n$ where the integral is defined component-wise. This \emph{polar decomposition} of $\mu$ suggests to define $\mu$-integrals as
$\int gd\mu = \int gf d\vnorm\mu$ and 
\[
\<\mu,\varphi\>=\int \<f,\varphi\>d\vnorm\mu
\]
for test fields $\varphi=(\varphi_1,\ldots,\varphi_n)$ with bounded measurable components (again, we use the same symbol for the action of $\mu$ and for the euclidean scalar product).

The regularity of finite Borel measures on $\R^n$ yields that $\mu$ is already determined by its action on fields $\varphi\in\DD(\R^n)^n$ of (classical) test functions and that 
for open sets $E\subseteq\R^n$
\[
\vnorm\mu(E)=\sup\{\<\mu,\varphi\>: \varphi\in\DD(E)^n \text{ with } |\varphi|\le 1\}.
\]
The components $\mu_k$ of a charge are distributions (of order $0$) on $\R^n$ and thus have distributional partial derivatives $\partial_j\mu_k$ defined
by $\<\partial_j\mu_k,\psi\>=-\<\mu_k,\partial_j \psi\>$ for $\psi\in\DD(\R^n)$. 
The divergence of $\mu$ is $\Div(\mu)=\partial_1\mu_1+\cdots+\partial_n\mu_n$ 
so that for $\psi\in\DD(\R^n)$
\[
\<\Div(\mu),\psi\>=-\<\mu,\nabla\psi\> 
\]
for the gradient $\nabla\psi\in\DD(\R^n)^n$.

For the interpretation of charges as elements of the dual of $\DD(\R^n)^n$ it is enough that $\mu$ is a \emph{local charge}, i.e., $\mu$ is a $\sigma$-additive
$\R^n$-valued function on the ring of all bounded Borel subsets of $\R^n$. The same definition of $\vnorm\mu$ as for charges then yields
a locally finite variation measure. In particular, for every locally finite positive Borel measure $\rho$ on $\R^n$ and every field
$\phi=(\phi_1,\ldots,\phi_n)$ with locally $\rho$-integrable components $\phi\cdot\rho$ is a local charge. By the polar decomposition,
every local charge is of this form. 

\medskip
Every $1$-Lipschitz curve $\gamma:[a,b]\to\R^n$, i.e., $|\gamma(s)-\gamma(t)|\le |s-t|$ for all $s,t\in [a,b]$, yields a charge $[\gamma]$ which acts on
test fields as a curve integral
\[
\<[\gamma],\varphi\>=\int_{[a,b]}\<\varphi\circ\gamma,\dot\gamma\>d\lambda^1
\]
where $\lambda^1$ is the Lebesgue measure and  $\dot\gamma$ denotes the derivative of $\gamma$ which exists $\lambda^1$-almost everywhere
by Rademacher's theorem. The components of $[\gamma]$ considered as a vector measure are the image measures or push-forwards of
$\dot\gamma_k \cdot \lambda^1$ under $\gamma$, i.e.,
\[
\gamma_\#(\dot\gamma_k\cdot\lambda^1)(A)=\int_{\{\gamma\in A\}} \dot\gamma_k d\lambda^1.
\]
The mapping $\gamma\mapsto[\gamma]$ fails to be injective for two different reasons: On the one hand, the curve integral is invariant under
reparametrizations, i.e., $[\gamma]=[\gamma\circ h]$ for increasing differentiable bijections $h:[\alpha,\beta]\to[a,b]$. This non-uniqueness could be
remedied by considering suitable equivalence classes of curves. On the other hand, parts of the curve integrals could cancel if $\gamma$ 
\emph{goes back and forth} along some part of the curve, for example, the curve defined by $\gamma(t)=t$ for $t\in [0,1]$ and $\gamma(t)=2-t$ for
$t\in [1,2]$ yields the $1$-dimensional charge $[\gamma]=0$. This phenomenon makes it difficult to determine the variation measure
$\vnorm{[\gamma]}$ in general. For open sets $E$ we get from the description of $\vnorm\mu(E)$ as the supremum of $\<\mu,\varphi\>$
with $\varphi\in \DD(E)^n$ with $|\varphi|\le 1$ that
\[
\vnorm{[\gamma]}(E)\le \int_{\{\gamma\in E\}} |\dot\gamma| d\lambda^1
 \text{ and } \Var([\gamma])\le L(\gamma)
\]
where $L(\gamma)=\int_{[a,b]}|\dot\gamma|d\lambda^1$ is the length of $\gamma$. 
Outer regularity of finite Borel measures on $\R^n$ yields the same inequality for all Borel sets $E\subseteq\R^n$. Moreover,
if
$\Var([\gamma])=L(\gamma)$ we can pass to complements and we thus get
\begin{align}\label{eom}
 \Var([\gamma])= L(\gamma) \text{ if and only if } \vnorm{[\gamma]}(A)=\int_{\{\gamma\in A\}}|\dot\gamma| d\lambda^1
 \text{ for all $A\in\B^n$.}
\end{align}

One can easily calculate the divergence of $[\gamma]$:
Since Lipschitz curves are absolutely continuous, i.e., they satisfy the fundamental theorem of calculus, we have
\[
\Div([\gamma])(\psi)=-\int_{[a,b]}\< \nabla \psi\circ\gamma,\dot\gamma\>d\lambda^1 =-(\psi\circ\gamma)\big|_a^b
= (\delta_{\gamma(a)}-\delta_{\gamma(b)})(\psi).
\]

\medskip
For fixed $\ell>0$ (in most situations, the case $\ell=1$ is enough) we denote
\[
K_\ell  =\{\gamma:[0,\ell]\to\R^n: \gamma \text{ is Lipschitz with } |\dot\gamma|\le 1 \text{ $\lambda^1$-a.e.}\}.
\]
Every rectifiable curve of length $\ell$ can be reparametrized to become an element of $K_\ell$ even with $|\dot\gamma|=1$ $\lambda^1$-a.e., 
but we do not need this arclength parametrization.
To give a precise meaning to the Smirnov decomposition $\mu=\int_{K_\ell} [\gamma] d\nu(\gamma)$ we consider the uniform
norm $\|\gamma\|=\sup\{|\gamma(t)|:t\in [0,\ell]\}$ on $K_\ell\subseteq C([0,\ell],\R^n)$. The Arzel\`a-Ascoli theorem implies that
$\{\gamma\in K_\ell: |\gamma(0)|\le m \}$ are compact in $C([0,\ell],\R^n)$ so that $K_\ell$ is $\sigma$-compact. It is convenient to
compactify the set of curves: We consider a one-point compactification $i:\R^n\to\hat\R^n=\R^n\cup\{\infty\}$, e.g., the inverse stereographic
projection $i:\R^n\to S^n=\{x\in\R^{n+1}:|x|=1\}$ with the north pole $\infty$, and $I:C([0,\ell],\R^n)\to C([0,\ell],\hat\R^n)$, 
$\gamma\mapsto i\circ\gamma$ where the range has again the topology of uniform convergence. 
From $|\gamma(t)|\le |\gamma(0)|+\ell$ for $\gamma\in K_\ell$ it easily follows that every $\gamma\in \hat K_\ell=\overline{I(K_\ell)}$ is either constant
with value $\infty$ or belongs to $I(K_\ell)$. Again, the Arzel\`a-Ascoli theorem implies the compactness of
$\hat K_\ell$ which is convenient to get finite Borel measures $\nu$ on $K_\ell$ from Alaoglu's theorem combined with 
the Riesz-Kakutani representation theorem for the dual of $C(\hat K_\ell)$.

Integrals $\mu=\int_{K_\ell} [\gamma] d\nu(\gamma)$ are meant in a weak sense, i.e., for continuous fields $\varphi=(\varphi_1,\ldots,\varphi_n)$
with compact support we have
\[
\<\mu,\varphi\>=\int_{K_\ell} \< [\gamma],\varphi\> d\nu(\gamma).
\]
For this integral  to be well-defined we need the Borel measurability of $\gamma\mapsto \<[\gamma],\varphi\>$. The continuity of $\varphi$ yields
that the curve integrals defining  $\<[\gamma],\varphi\>$ are limits of Riemann-Stieltjes sums 
\[
\sum_{k=1}^{\ell m} \<\varphi(\gamma(k/m)),\gamma(k/m)-\gamma((k-1)/m)\>
\]
 for $m\to\infty$, and for fixed $m$, these sums depend
continuously on $\gamma\in K_\ell$ with respect to the uniform norm. Since indicator functions of open sets are increasing limits of continuous
functions a monotone class argument  yields the measurability of $\gamma\mapsto \<[\gamma],\varphi\>$ for 
all $\varphi$ with bounded measurable components and that the
displayed equality above extends to such vector fields.

We can now state the main theorems from \cite{Smi} (the first one together with proposition \ref{ThA2} below is Smirnov's Theorem A and the
second one is a version of his Theorem C):

\begin{theorem}\label{ThA}
 For every $\ell>0$ and every $n$-dimensional charge $\mu$ with $\Div(\mu)=0$ there is a finite positive measure $\nu$
on $K_\ell$ with
\[
\mu=\int_{K_\ell} [\gamma]d\nu(\gamma),\; \vnorm\mu=\int_{K_\ell} \vnorm{[\gamma]} d\nu(\gamma),
\]
and $\nu$-almost every $\gamma\in K_\ell$ has values in $\supp(\mu)$ and satisfies $\Var([\gamma])=L(\gamma)=\ell$.
\end{theorem}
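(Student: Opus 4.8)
The plan is to reduce, by convolution, to a smooth divergence-free $L^1$ vector field, to build the decomposition there explicitly from the flow by a volume-preservation argument, and then to pass to the limit with the Arzel\`a-Ascoli theorem and weak-$*$ compactness of measures. First I would smooth: writing $\mu=f\cdot\vnorm\mu$, fixing a mollifier $\rho_\e$, and setting $v_\e:=(f\vnorm\mu)*\rho_\e$, $\mu_\e:=v_\e\lambda^n$, one has $v_\e$ smooth with bounded gradient, $v_\e\in L^1(\R^n)^n$ with $\int|v_\e|\,d\lambda^n\le\Var(\mu)$, $\Div(v_\e\lambda^n)=(\Div\mu)*\rho_\e=0$, $\mu_\e\to\mu$ and $\vnorm{\mu_\e}\to\vnorm\mu$ weakly-$*$, $\vnorm{\mu_\e}\le\vnorm\mu*\rho_\e$, and $\Var(\mu_\e)\to\Var(\mu)$ (the last because the total variation is weakly lower semicontinuous while convolution does not increase it).

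Fix $\e$ and write $v=v_\e$. Being globally Lipschitz, $v$ has a flow $(\Phi_t)_{t\in\R}$ defined for all times, and $\Div v=0$ gives by Liouville's theorem $(\Phi_t)_\#\lambda^n=\lambda^n$; by uniqueness for the ODE no non-constant trajectory meets a zero of $v$. A further consequence of volume preservation is that the set of $y$ with $v(y)\ne0$ whose backward trajectory has finite arclength is $\lambda^n$-null: that set is invariant under the (also volume-preserving) reversed flow, along which the backward arclength strictly decreases, and integrating this identity against $\lambda^n$ contradicts volume preservation unless the set is null. For the remaining $y$ with $v(y)\ne0$ let $\omega_y\in K_\ell$ traverse at unit speed, starting from $y$, the arclength-$\ell$ piece of the trajectory through $y$; then $\omega_y$ has unit speed, $L(\omega_y)=\ell$, and values in $\supp(\mu_\e)$. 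Put $\kappa_\e:=\tfrac1\ell|v_\e|\lambda^n$, a finite measure of mass $\Var(\mu_\e)/\ell$, and $\nu_\e:=(y\mapsto\omega_y)_\#\kappa_\e$. Changing variables from arclength back to the time parameter of the volume-preserving flow, using Fubini, and using that the backward trajectory from a point $z$ runs arclength exactly $\ell$ in some finite time $s^\ast(z)$, so that $\int_0^{s^\ast(z)}|v(\Phi_{-s}(z))|\,d\lambda^1(s)=\ell$, one obtains for all $\varphi\in\DD(\R^n)^n$ and $h\in C_c(\R^n)$
\[
\int\<[\omega_y],\varphi\>\,d\kappa_\e(y)=\int\<\varphi,v_\e\>\,d\lambda^n,\qquad \int\!\Bigl(\int_0^\ell h(\omega_y(t))\,d\lambda^1(t)\Bigr)d\kappa_\e(y)=\int h\,|v_\e|\,d\lambda^n,
\]
that is, $\mu_\e=\int[\gamma]\,d\nu_\e(\gamma)$ and $\vnorm{\mu_\e}=\int\gamma_\#\lambda^1\,d\nu_\e(\gamma)$, where $\gamma_\#\lambda^1$ denotes the push-forward of Lebesgue measure on $[0,\ell]$.

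For the limit, the masses $\nu_\e(K_\ell)=\Var(\mu_\e)/\ell$ are bounded, and $\nu_\e(\{\gamma:|\gamma(0)|>R\})=\tfrac1\ell\vnorm{\mu_\e}(\{|x|>R\})\le\tfrac1\ell\vnorm\mu(\{|x|>R-1\})$ for $\e<1$, which is small for large $R$ uniformly in $\e$; since $\{\gamma\in K_\ell:|\gamma(0)|\le R\}$ is compact in $C([0,\ell],\R^n)$ by Arzel\`a-Ascoli, the $\nu_\e$ are tight. Viewing them as measures on the compact space $\hat K_\ell$, Alaoglu's theorem and the Riesz-Kakutani representation give a weak-$*$ limit $\nu$ along a subsequence, concentrated on $K_\ell$, with $\nu(K_\ell)=\Var(\mu)/\ell$. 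Since $\gamma\mapsto\<[\gamma],\varphi\>$ and $\gamma\mapsto\int_0^\ell h(\gamma(t))\,d\lambda^1(t)$ are bounded and continuous on $\hat K_\ell$ (the first by the Riemann-Stieltjes description recalled in Section~2, the second by dominated convergence, both vanishing at the constant curve $\infty$), I would pass to the limit in the two identities above, using $\mu_\e\to\mu$ and $\vnorm{\mu_\e}\to\vnorm\mu$, to get
\[
\mu=\int_{K_\ell}[\gamma]\,d\nu(\gamma),\qquad \vnorm\mu=\int_{K_\ell}\gamma_\#\lambda^1\,d\nu(\gamma).
\]

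From here everything is formal. Since $|\dot\gamma|\le1$ one has $\vnorm{[\gamma]}\le\gamma_\#\lambda^1$, so $\int\vnorm{[\gamma]}\,d\nu\le\vnorm\mu$, while $\mu=\int[\gamma]\,d\nu$ together with the description of $\vnorm\mu$ on open sets as a supremum of $\<\mu,\varphi\>$ gives $\vnorm\mu\le\int\vnorm{[\gamma]}\,d\nu$; hence the second conclusion $\vnorm\mu=\int\vnorm{[\gamma]}\,d\nu$ holds, and moreover $\int(\gamma_\#\lambda^1-\vnorm{[\gamma]})\,d\nu=0$ with a nonnegative integrand, so $\gamma_\#\lambda^1=\vnorm{[\gamma]}$ for $\nu$-a.e.\ $\gamma$. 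Evaluated on $\R^n$ this gives $\ell=\Var([\gamma])\le L(\gamma)\le\ell$, i.e.\ $\Var([\gamma])=L(\gamma)=\ell$; evaluated on the open set $\R^n\setminus\supp(\mu)$ it gives $\lambda^1(\{\gamma\in\R^n\setminus\supp\mu\})=0$, and since an open $\lambda^1$-null subset of $[0,\ell]$ is empty, $\gamma$ has values in $\supp(\mu)$. The hard part is the construction in the second paragraph: the choice of arclength-$\ell$ windows of the flow lines together with the weight $\tfrac1\ell|v_\e|\lambda^n$, which is what makes volume preservation of the $v_\e$-flow produce in one stroke a \emph{finite} measure, curves of length exactly $\ell$, and the exact variation identity --- plus the volume-preservation argument ruling out trajectories of finite backward length. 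The passage to the limit is then routine.
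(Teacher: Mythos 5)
Your overall architecture coincides with the paper's: mollify, decompose the smoothed field by means of its flow and a Liouville-type invariance statement, push a weighted measure forward onto the flow curves, pass to the limit on the compactification $\hat K_\ell$ via Alaoglu, and then extract the variation identity, the length-$\ell$ property and the support property formally from the limiting identities (your last paragraph is essentially the paper's endgame, and your continuity claim for $\gamma\mapsto\langle[\gamma],\varphi\rangle$ is legitimate because the Riemann--Stieltjes sums converge uniformly on $K_\ell$, with error at most $\ell$ times the modulus of continuity of $\varphi$; the paper instead uses weak $L^2$-convergence of the derivatives). Where you genuinely diverge is at fixed $\varepsilon$: you keep the unnormalized field $v_\varepsilon=\mu*\rho_\varepsilon$, invoke the classical Liouville theorem (invariance of $\lambda^n$), and reparametrize the flow lines by arclength to get unit-speed curves of length exactly $\ell$, weighted by $\frac1\ell|v_\varepsilon|\lambda^n$. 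The paper normalizes instead: it flows along $\phi=\mu*k/(\vnorm{\mu}*k)$, which satisfies $|\phi|\le1$, and uses the generalized Liouville proposition that $\varrho=(\vnorm{\mu}*k)\lambda^n$ is flow-invariant because $\Div(\phi\cdot\varrho)=\Div((\mu*k)\lambda^n)=0$; its curves are time-parametrized flow lines, automatically in $K_\ell$ but of length only $\le\ell$, and length $\ell$ is recovered only in the limit from the chain $\Var(\mu)\le\int\Var([\gamma])\,d\nu\le\int L(\gamma)\,d\nu\le\ell\,\nu(K_\ell)\le\Var(\mu)$.

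The price of your variant is the lemma you dispatch in one sentence: that $\lambda^n$-almost every point with $v_\varepsilon\neq0$ lies on a trajectory whose forward and backward arclengths are at least $\ell$ (forward to define $\omega_y$ at all, backward because your Fubini computation only produces $\min(\ell,\text{backward arclength of }z)$ in the inner integral). The statement is true, but the argument you sketch --- the finite-arclength set is invariant, arclength decreases along the flow, and ``integrating this identity against $\lambda^n$ contradicts volume preservation'' --- is not a proof: that set may have infinite Lebesgue measure, the integral of the arclength functional over it need not be finite, and strict decrease of a functional along orbits is perfectly compatible with a measure-preserving flow on an infinite-measure space (wandering/dissipative behaviour), so no contradiction with volume preservation comes from integration alone. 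A genuine argument needs an extra idea, for instance: choose a compact $K$ of positive measure on which the forward arclength lies in $[\epsilon,L]$ and $v\neq0$; all forward orbits from $K$ then stay in the compact $L$-neighbourhood of $K$, of finite volume; since the arclength functional decreases to $0$ along each orbit, no orbit returns to $K$ after some finite time, so suitably chosen time-$N$ images of a positive-measure part of $K$ are pairwise disjoint subsets of a finite-measure set --- a contradiction (a Poincar\'e-recurrence/wandering-set argument). This is precisely the technical difficulty that the paper's normalization $\phi=\mu*k/(\vnorm{\mu}*k)$ together with the weighted measure $(\vnorm{\mu}*k)\lambda^n$ is engineered to bypass: no arclength reparametrization, hence no zeros or short trajectories to worry about. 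So your route can be completed, but as written the central claim of your second paragraph is a real gap, and closing it costs more than the paper's detour through the weighted Liouville proposition; the remaining steps you assert (e.g.\ $\vnorm{\mu_\varepsilon}\to\vnorm{\mu}$ weak-$*$ from convergence of total variations) are standard and fine.
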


\begin{theorem}\label{ThC}
 An $n$-dimensional charge $\mu$ has a decomposition $\mu=\int_{K_\ell} [\gamma]d\nu(\gamma)$ for some (or, equivalently, all) $\ell>0$
 and a finite positive measure $\nu$ on $K_\ell$ if and only if $\Div(\mu)$ is a real signed measure. Then there is a decomposition such that
 $\vnorm\mu=\int_{K_\ell} \vnorm{[\gamma]} d\nu(\gamma)$
and $\nu$-almost every $\gamma\in K_\ell$ has values in $\supp(\mu)$ and satisfies $\Var([\gamma])=L(\gamma)$.
\end{theorem}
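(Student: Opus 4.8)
The plan is to derive Theorem~\ref{ThC} from Theorem~\ref{ThA} by the \emph{dimension trick} announced in the introduction: one adds an extra spatial coordinate so that the divergence picks up the missing term. Concretely, suppose $\Div(\mu)=\sigma$ is a real signed measure on $\R^n$. Write $\sigma=\sigma^+-\sigma^-$ with the two parts finite and positive. First I would normalize: since $\sigma(\R^n)=\langle\Div(\mu),1\rangle=0$ in the sense that $\Div(\mu)$ applied to functions that are $1$ near $\supp(\mu)$ vanishes (because $\langle\Div(\mu),\psi\rangle=-\langle\mu,\nabla\psi\rangle=0$ for such $\psi$), we have $\sigma^+(\R^n)=\sigma^-(\R^n)=:c$. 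Then I would form an $(n+1)$-dimensional charge $\tilde\mu$ on $\R^{n+1}=\R^n\times\R$ whose first $n$ components are $\mu$ placed on the slice $\R^n\times\{0\}$ together with correction terms, and whose last component is built from $\sigma^+$ and $\sigma^-$ placed on $\R^n\times\{0\}$ and $\R^n\times\{1\}$ respectively, linked by ``vertical segments,'' so that $\Div(\tilde\mu)=0$. The cleanest realization: let $\tilde\mu = \mu\otimes\delta_0 \;+\; e_{n+1}\cdot\big(\pi_\#^{-1}\sigma^+\big)$ on the strip $\R^n\times[0,1]$, where the vertical part is the local charge $(x,s)\mapsto e_{n+1}$ integrated against $\sigma^+\otimes\lambda^1|_{[0,1]}$; a short computation with the product rule for distributional divergence shows the horizontal divergence of the first term is $\sigma\otimes\delta_0$ (using $\Div\mu=\sigma$) and the vertical divergence of the second term is $\sigma^+\otimes(\delta_1-\delta_0)$, and with one further term carrying $\sigma^-$ upward one arranges total cancellation. (I will have to be slightly careful that $\tilde\mu$ is a genuine charge, i.e.\ of finite total variation; it is, with $\Var(\tilde\mu)\le \Var(\mu)+2c$.)

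Next I would apply Theorem~\ref{ThA} to $\tilde\mu$ in dimension $n+1$, with parameter $\ell+2$ (to leave room for the vertical excursions of length at most $1$ at each end), obtaining a finite positive measure $\tilde\nu$ on $\tilde K_{\ell+2}\subseteq K_{\ell+2}(\R^{n+1})$ with $\tilde\mu=\int[\tilde\gamma]\,d\tilde\nu$, $\vnorm{\tilde\mu}=\int\vnorm{[\tilde\gamma]}\,d\tilde\nu$, and $\tilde\nu$-a.e.\ curve valued in $\supp(\tilde\mu)\subseteq\R^n\times[0,1]$ with $\Var([\tilde\gamma])=L(\tilde\gamma)=\ell+2$. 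Because $\supp(\tilde\mu)$ sits in the strip and the only way for a curve there to contribute to the vertical component is to run between the levels $0$ and $1$, the structure of $\tilde\mu$ forces $\tilde\nu$-a.e.\ curve to have the form: a vertical descent (or trivial piece), then a portion lying in the slice $\R^n\times\{0\}$ which, after dropping the last coordinate, is a curve $\gamma$ in $\R^n$, then a vertical ascent. Projecting off the last coordinate, $\tilde\gamma\mapsto\gamma$ is continuous, and I would push $\tilde\nu$ forward under (the restriction of) this projection composed with cutting out the horizontal middle segment and reparametrizing it onto $[0,\ell]$. This yields a candidate $\nu$ on $K_\ell(\R^n)$.

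It remains to check that this $\nu$ works. First, $\mu=\int_{K_\ell}[\gamma]\,d\nu$: test against $\varphi\in\DD(\R^n)^n$, lift it to a field on $\R^{n+1}$ independent of the last coordinate and supported near the slice level $0$, and use that the vertical parts of $\tilde\gamma$ contribute nothing to $\langle[\tilde\gamma],\tilde\varphi\rangle$ since $\tilde\varphi$ has vanishing $(n+1)$-st component; then $\langle[\tilde\gamma],\tilde\varphi\rangle=\langle[\gamma],\varphi\rangle$ and the Theorem~\ref{ThA} identity for $\tilde\mu$ gives the one for $\mu$. Second, for the variation identity $\vnorm\mu=\int\vnorm{[\gamma]}\,d\nu$ and the length property $\Var([\gamma])=L(\gamma)$: the inequality $\vnorm\mu\le\int\vnorm{[\gamma]}\,d\nu$ is automatic from $\mu=\int[\gamma]\,d\nu$, while $\ge$ follows by comparing with the $(n+1)$-dimensional identity restricted to the horizontal directions, using that $\vnorm{[\tilde\gamma]}$ on the horizontal part of $\tilde\gamma$ equals $\vnorm{[\gamma]}$ (the extra coordinate is constant there, so $|\dot{\tilde\gamma}|=|\dot\gamma|$ on that part) together with \eqref{eom} and $\Var([\tilde\gamma])=L(\tilde\gamma)$, which localizes to each sub-arc. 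The support condition ($\gamma$ valued in $\supp(\mu)$) needs a small argument: a priori $\tilde\gamma$ is valued in $\supp(\tilde\mu)$, whose slice at level $0$ may be larger than $\supp(\mu)\times\{0\}$; one removes this by choosing the correction terms in $\tilde\mu$ supported on $\supp(\sigma)\subseteq\supp(\mu)$ and arguing that a curve contributing to $\tilde\mu$ cannot wander outside. Finally, the equivalence of ``some $\ell$'' and ``all $\ell$'' and the converse direction---if $\mu=\int[\gamma]\,d\nu$ then $\Div(\mu)=\int(\delta_{\gamma(0)}-\delta_{\gamma(\ell)})\,d\nu$ is a finite signed measure by the divergence computation for $[\gamma]$ already recorded---are short. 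I expect the main obstacle to be the bookkeeping in the first paragraph: making the extended charge $\tilde\mu$ genuinely divergence-free while keeping its support inside the strip and over $\supp(\mu)$, and verifying that Theorem~\ref{ThA} applied to it produces curves of exactly the ``descend--traverse--ascend'' shape so that the projection is well-behaved $\tilde\nu$-a.e.
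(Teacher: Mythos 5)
Your strategy is the same as the paper's (extend $\mu$ to a divergence-free charge in $\R^{n+1}$ and apply Theorem \ref{ThA}), but the concrete extension you propose does not work: it is not divergence free, and no choice of the ``further term carrying $\sigma^-$'' can make it so. Write $\sigma=\Div(\mu)$. Your horizontal part $\mu\otimes\delta_0$ has divergence $\sigma\otimes\delta_0$, while any vertical column field $e_{n+1}\cdot\bigl(\tau\otimes\lambda^1|_{[0,1]}\bigr)$ (with $\tau$ an arbitrary signed measure, covering your $\sigma^+$-term plus any $\sigma^-$-term) has divergence $\tau\otimes(\delta_0-\delta_1)$. Vanishing of the total divergence then forces $\tau\otimes\delta_1=0$, i.e.\ $\tau=0$, and hence $\sigma=0$; the equality of total masses $\sigma^+(\R^n)=\sigma^-(\R^n)$ is irrelevant because $\sigma^+$ and $\sigma^-$ are mutually singular measures at level $1$ and cannot cancel each other there. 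More generally, no purely vertical correction of finite variation supported in a strip can absorb the defect $\sigma\otimes\delta_0$ coming from a \emph{single} horizontal copy of $\mu$ (an infinite column $-\sigma\otimes\lambda^1|_{[0,\infty)}$ would, but it has infinite variation, so Theorem \ref{ThA} no longer applies). This is exactly why the paper uses \emph{two} horizontal copies with opposite signs, $\mu^+=\bigl(\mu\otimes(\delta_0-\delta_\ell),\,-\Div(\mu)\otimes\lambda^1|_{[0,\ell]}\bigr)$, so that the vertical columns are emitted at one slice and absorbed at the other; your closing caveat flags this ``bookkeeping,'' but it is the essential missing idea, not a routine verification.

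The second gap is that, even with a correct $\mu^+$, the ``descend--traverse--ascend'' structure of $\nu^+$-almost every curve is not a soft consequence of the shape of $\mu^+$; it is the technical core of the proof and you only assert it. In the paper it is derived from the second identity of Theorem \ref{ThA}, $\vnorm{\mu^+}=\int\vnorm{[\gamma]}\,d\nu^+$, together with (\ref{eom}): on the open strip $\mu^+$ is purely vertical with density of modulus $1$ on $E_\pm\times(0,\ell)$, and a saturation argument (a chain of inequalities that must all be equalities, using $\Var([\gamma])=L(\gamma)=\ell$ for a.e.\ curve) shows $\dot\gamma_{n+1}=\pm1$ a.e.\ on $\{\gamma\in E_\pm\times(0,\ell)\}$; only then does one know that a.e.\ curve is vertical inside the strip, that the time set spent in $\R^n\times\{0\}$ is an interval $[\alpha,\beta]$, and that cutting at $\alpha,\beta$, projecting, and pushing $\nu^+$ forward yields the variation identity and the support statement for $\mu$. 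Your sketch takes exactly these points for granted (``the structure of $\tilde\mu$ forces\dots''), and with your (non-divergence-free) $\tilde\mu$ they would in any case be unavailable. The easy items in your last paragraph --- the converse direction via $\Div([\gamma])=\delta_{\gamma(0)}-\delta_{\gamma(\ell)}$ and the ``some/all $\ell$'' remark --- are fine.
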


The difference to the divergence free case is thus that the length of the representing curves is only bounded by $\ell$.
Before turning to the proofs of these theorems let us show the application mentioned in the introduction (where $\|\cdot\|_M$
denotes the uniform norm on $M$).

\begin{theorem}[Havin-Smirnov]\label{ThHS}
    Let $M\subseteq\R^n$ be a compact set such that all rectifiable curves in $M$ are constant. Then, for any $f,f_1,\ldots,f_n \in C(M)$ and any $\e>0$, 
    there is $\psi \in \DD(\R^n)$ such that
    \[
        \|f-\psi\|_M+\sum_{j=1} ^n \|f_j- \partial_j \psi\|_M<\e.
    \]
\end{theorem}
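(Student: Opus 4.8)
The plan is to prove the contrapositive-flavored statement by a Hahn--Banach separation argument: the claimed approximation says exactly that $(f,f_1,\ldots,f_n)$ lies in the closure in $C(M)\times C(M)^n$ of the subspace
\[
V=\{(\psi|_M,\partial_1\psi|_M,\ldots,\partial_n\psi|_M):\psi\in\DD(\R^n)\}.
\]
If this closure were not all of $C(M)^{n+1}$, then by Hahn--Banach and the Riesz representation theorem there would be a nonzero tuple of signed Borel measures $(\rho,\mu_1,\ldots,\mu_n)$ on $M$, annihilating $V$, i.e.
\[
\int_M\psi\,d\rho+\sum_{j=1}^n\int_M\partial_j\psi\,d\mu_j=0\qquad\text{for all }\psi\in\DD(\R^n).
\]
Viewing $\mu=(\mu_1,\ldots,\mu_n)$ as an $n$-dimensional charge supported on $M$, this condition reads $\<\mu,\nabla\psi\>=\int\psi\,d\rho$, i.e. $\Div(\mu)=-\rho$, which in particular is a real signed measure.

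The second step is to invoke Theorem~\ref{ThC}: since $\Div(\mu)$ is a signed measure, $\mu$ decomposes as $\mu=\int_{K_\ell}[\gamma]\,d\nu(\gamma)$ with $\nu$-a.e.\ $\gamma$ taking values in $\supp(\mu)\subseteq M$ and satisfying $\Var([\gamma])=L(\gamma)$. But every rectifiable curve in $M$ is constant by hypothesis, so $L(\gamma)=0$ and hence $[\gamma]=0$ for $\nu$-a.e.\ $\gamma$; integrating gives $\mu=0$, i.e.\ all $\mu_j=0$. Then the annihilation condition collapses to $\int_M\psi\,d\rho=0$ for all $\psi\in\DD(\R^n)$, which forces $\rho=0$ by density of $\DD(\R^n)|_M$ (or simply because $\rho$, as a measure on $\R^n$ supported on $M$, is determined by its action on $\DD(\R^n)$). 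Thus $(\rho,\mu_1,\ldots,\mu_n)=0$, contradicting that it was a nonzero annihilator, and the closure is everything, which is the assertion.

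One technical point to check is that $\mu$, initially only a charge on $M$ (equivalently a compactly supported $\R^n$-valued measure on $\R^n$ with total variation $\sum_j\Var(\mu_j)<\infty$, after extending by zero), genuinely satisfies the hypotheses of Theorem~\ref{ThC} as a charge on $\R^n$; this is immediate since extending the $\mu_j$ by zero outside $M$ keeps them finite signed measures and the distributional identity $\Div(\mu)=-\rho$ holds verbatim against all $\psi\in\DD(\R^n)$. A second point is the passage from ``$L(\gamma)=0$'' to ``$[\gamma]=0$'': this is exactly the bound $\Var([\gamma])\le L(\gamma)$ recorded in the excerpt, so a curve of zero length is a constant curve and $\<[\gamma],\varphi\>=\int_0^\ell\<\varphi\circ\gamma,\dot\gamma\>\,d\lambda^1=0$.

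The main obstacle is conceptual rather than computational: one must correctly identify the Hahn--Banach annihilator of $V$ with a divergence-measure charge and recognize that Theorem~\ref{ThC} applies. Once that dictionary is in place the argument is essentially a two-line consequence, which is precisely why the authors advertise it as ``a very simple proof.'' No delicate estimates are needed; the only care required is bookkeeping about supports and about which space of test functions one dualizes against.
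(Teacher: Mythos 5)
Your proposal is correct and is essentially the paper's own proof: Hahn--Banach plus Riesz--Kakutani to produce an annihilating tuple of measures, identification of the annihilation condition as saying that $\Div(\mu)$ is a signed measure, and Theorem~\ref{ThC} together with the hypothesis that rectifiable curves in $M$ are constant to force $\mu=0$ and then $\rho=0$. The only quibble is an inconsequential sign slip: the annihilation condition gives $\<\mu,\nabla\psi\>=-\int_M\psi\,d\rho$, hence $\Div(\mu)=\rho$ rather than $-\rho$, which changes nothing in the argument.
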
    

The somewhat surprising aspect of this theorem about \emph{free approximation} is that although $M$ can be quite big, e.g., in terms of its Hausdorff dimension,
knowledge of $\partial_j\psi|_M$ does not yield any estimates for $\psi$ on $M$ which one would get in presence of rectifiable curves
in $M$: If $\gamma$ is a curve in $M$ joining $x,y$, the mean value inequality allows to estimate $|\psi(x)-\psi(y)|$ by
the norms  of $\partial_j\psi$ times the length of $\gamma$ (this argument shows the necessity of the absence of
rectifiable curves in $M$ for the free approximation). A similar application to the density of restrictions
$\{\psi|_M: \psi\in\DD(\R^n)\}$ in a suitably defined space $C^1(M)$ of continuously differentiable functions can be seen in
\cite{FLW} which was the main motivation for the present authors to get a better understanding of Smirnov's results.

\begin{proof}[Proof of Theorem \ref{ThHS}]
 We show with the aid of the Hahn-Banach theorem that $I:\mathcal{D}(\R^n)\longrightarrow C(M)^{n+1}$ defined by 
 $\psi\mapsto (\psi|_M,\partial_1\psi|_M\ldots\partial_n\psi|_M)$ has dense range. 
 For $\Phi$ in the dual of $C(M)^{n+1}$ with $\Phi\circ I= 0$ we thus have to show $\Phi=0$. 
 By the Riesz-Kakutani representation theorem there are (signed) Borel measures $\mu_0, \ldots,\mu_n$ on $M$ with
    \[
        \Phi (f_0,\ldots,f_n)= \sum_{j=0}^n \int_M f_j d\mu_j
    \]
    for all $f_0,\ldots,f_n\in C(M)$.
 For the charge $\mu=(\mu_1,\ldots,\mu_n)$ and $\psi\in \mathcal{D}(\R^n)$, we have
    \[
        0=\Phi (I(\psi))=\int_M \psi d\mu_0 + \sum_{j=1}^n\int_M \partial_j\psi d\mu_j
        = \<\mu_0 - \Div (\mu),\psi\>.
    \]
 Thus, $\Div(\mu)=\mu_0$ so that Theorem \ref{ThC} (for fixed $\ell>0$) yields a decomposition
    \[
        \mu = \int _{K_\ell} [\gamma] d\nu(\gamma)
    \]
where $\nu$-almost all curves have values in $M$.    Since by assumption, rectifiable curves in $M$ are constant
and hence satisfy $[\gamma]=0$,
we obtain
$\mu=0$, $\mu_0=\Div(\mu)=0$ and thus $\Phi=0$.
\end{proof}

\ignore{ 
Given a locally finite vector measure $\mu=(\mu_1,\ldots,\mu_n)$ on $\R^n$, it acts on smooth vector fields $\varphi=(\varphi_1,\ldots,\varphi_n)$ as:
\[
    \mu (\varphi) = \sum_{j=1}^n \int \varphi_j d\mu_j = \int \pe{\varphi}{f_\mu} d\vnorm{\mu}.
\]
If $\mu$ is given by a vector field $T=(T_1,\ldots,T_n)$, with $T_j\in L^1(\R^n,\L_n)$, we have $\mu=(T_1\mathcal{L}_n,\ldots,T_n\mathcal{L}_n)$ and
\[
    \mu (\varphi)=\int \pe{T}{\varphi}d\L_n.
\]
In this case we identify the vector field with the measure and write $T(\varphi)$.

For a vector measure $\mu$, define its (distributional) divergence as
\[
    \Div(\mu) (\psi)= -\mu (\nabla \psi), \quad \psi\in \mathcal{D}(\R^n,\R),  
\]
where $\nabla\psi=\left(\partial_j \psi\right)_j\in \mathcal{D}(\R^n,\R^n)$ is the gradient of $\psi$. When we work on $\R^{n+1}$, the notation $\nabla_x \psi (x,t)$ means $\nabla (\psi(\cdot, t) )(x) \in \mathcal{D}(\R^n,\R^n)$. As a reminder, we recall that when $T$ is a C$^1$ vector field, then we simply have $\Div T=\partial_1 T+\cdots \partial_n T$.
\begin{lemma}
\label{L:norm}
    Let $T$ be a locally finite measure. For an open set $E\subseteq\R^n$,
    \[
        \vnorm{T}(E) = \sup \{ T(\varphi)\: : \: \varphi\in \mathcal{D}(\R^n,\R^n), \supp (\varphi)\subseteq E, \rnnorm{\varphi} =1  \}.
    \]
\end{lemma}

\begin{proof}
    For a measure $T$, we have $T=f_T\vnorm{T}$, with $\pnorm{f_T}=1$. Let $E$ be an open set, then $\mathcal{D}(E,\R^n)$ is dense in $L^1((E,\restr{\vnorm{T}}{E}),\R^n)$, since $\vnorm{T}$ is locally finite. Because $f_T\in L^1((E,\restr{\vnorm{T}}{E}),\R^n)$, for each $\e>0$, there is $\varphi_\e\in \mathcal{D}(E,\R^n)$, with $\rnnorm{\varphi_\e}=1$ such that
    \[
        \|f_T-\varphi_\e\|_{\mathcal{L}^1((E,\restr{\vnorm{T}}{E}),\R^n)}=\int _E \pnorm{f_T-\varphi_\e} d\vnorm{T}<\e.
    \]

    Then,
        \begin{align*}
        \vnorm{T}(E)&=\int_E d\vnorm{T}= \int_E \pe{f_T}{f_T} d \vnorm{T} 
        \\
        &=  \int _E \pe{\varphi_\e}{f_T} d \vnorm{T} +    \int_E \pe{f_T-\varphi_\e}{f_T} d \vnorm{T}
        \\
        &\leq  \sup \{T(\varphi)\} +   \left| \int_E \pe{f_T-\varphi_\e}{f_T} d \vnorm{T}\right|
        \\
        &\leq \sup \{T(\varphi)\} +   \int_E  \left|\pe{f_T-\varphi_\e}{f_T} \right|d \vnorm{T}
        \\
        &\leq \sup \{T(\varphi)\} +   \int_E \pnorm{f_T-\varphi_\e}\pnorm{f_T}  d \vnorm{T}
        \\
        &= \sup \{T(\varphi)\} + \e,
        \end{align*}
    where the $\sup$ is taken as in the statement. This proves the $\leq$ inequality. For the other one, we have, using Cauchy-Schwarz,
    \[
        T(\varphi)=\int_{\R^n} \pe{\varphi}{f_T} d\vnorm{T} = \int _E \pe{\varphi}{f_T} d\vnorm{T} \leq \rnnorm{\varphi}  \cdot T(E)\leq \vnorm{T}(E).
    \]
\end{proof}

\section{curves}

Here we provide insight on the curves appearing in the decompositions of the measures, on their topology and their measures. All the curves considered are Lipschitz functions with Lipschitz constant $1$ (or less) from an interval, usually $[0,\ell]$, to $\R^n$.  Being Lipschitz, they are rectifiable curves, which have a derivative $\L_1$-almost everywhere by Rademacher's Theorem, then, for any such curve $\gamma$,
\[
    \text{length} (\gamma) = \int _0 ^\ell \pe{\dot\gamma}{\dot\gamma} d\L_1 \leq \int _0^\ell 1d\L_1 =\ell.
\]

This fact justifies the following notations:
\[
    \mathcal{C}_{\leq\ell} ^n =\{\gamma:[0,\ell]\longrightarrow \R^n \text{ continuous} \: : \: |\gamma(s)-\gamma(t)|\leq |s-t|,\, 0\leq s,t\leq \ell  \},
\]
\[
    \mathcal{C}_\ell ^n =\{ \gamma\in \mathcal{C}_{\leq\ell} ^n \: : \: \text{length}(\gamma)=\ell \}.
\]

In order to properly define measures on $\mathcal{C}_{\leq\ell} ^n$, we consider the embedding $j:\R^n\longrightarrow \R^n_\infty$ into a (metric)
one point compactification $\R^n_\infty$ of $\R^n$, e.g. identified with the sphere $S^n\subseteq \R^{n+1} $, with north pole $\infty$, the Euclidean distance of $\R ^{n+1}$ and $j$ the inverse of the stereographic projection. This embedding defines the following map $J:C([0,\ell],\R^n)\longrightarrow C([0,\ell],\R^n_\infty)$, $\gamma\mapsto j\circ \gamma$, which is continuous if both spaces are endowed with the (metric) topology of uniform convergence.

\begin{lemma}
    The set $\widehat{\mathcal{C}}_{\leq\ell} ^n=J(\mathcal{C}_{\leq\ell} ^n) \cup \{\infty\}$ is compact in $C([0,\ell],\R^n_\infty)$.
\end{lemma}
\begin{proof}
    First we see that $\widehat{\mathcal{C}}_{\leq\ell} ^n$ is closed. It is always possible to approximate the function $\infty$ with Lipschitz functions, so $\infty$ is in the closure of $J(\mathcal{C}_{\leq\ell} ^n)$. Now let $\gamma\neq \infty$ be in the closure. Then there is $t_0\in [0,\ell]$ such that $\gamma(t_0)\in j(\R^n)$ and there is $(\gamma_k)_k\subseteq \mathcal{C}_{\leq\ell} ^n$ such that $J(\gamma_k)\longrightarrow \gamma$ in $C([0,\ell],\R^n_\infty)$. By the continuity of $j$, there is $M>0$ such that $\pnorm{\gamma_k (t_0)}\leq M$, and then, for any $t\in [0,\ell] $,  $\pnorm{\gamma_k (t)}\leq M+\ell$, since the $\gamma_k$ are Lipschitz. So for $t\in [0,\ell] $, $\gamma_k (t)\in B$, the (compact) ball of radius $M+\ell$, centered at $0$ in $\R^n$. The continuity of $j$ yields the compactness of $j(B)$, which implies $\gamma (t) \in j(B)$, for all $t\in [0,\ell]$. Then $\gamma(t)\neq \infty$, for all $t\in [0,\ell]$, so $j^{-1}\circ \gamma \in  C([0,\ell],\R^n) $. Certainly, $\gamma_k\longrightarrow j^{-1}\circ \gamma$ in $C([0,\ell],\R^n)$, and the $\gamma_k$ being Lipschitz implies $j^{-1}\circ \gamma \in \mathcal{C}_{\leq\ell} ^n$ and $\gamma = J(j^{-1}\circ \gamma) \in  J(\mathcal{C}_{\leq\ell} ^n)$. This proves that $\widehat{\mathcal{C}}_{\leq\ell} ^n$ is closed.

    Furthermore, $\widehat{\mathcal{C}}_{\leq\ell} ^n$ is compact, by  Arzel\`a–Ascoli theorem, because, first it is ``uniformly bounded,'' since $\R^n_\infty$ already is, and secondly it is uniformly equicontinuous for being Lipschitz functions with the same constant.
\end{proof}

After this lemma we can easily define the measures on $\widehat{\mathcal{C}}_{\leq\ell} ^n$ as the space $C (\widehat{\mathcal{C}}_{\leq\ell} ^n,\R)^*$, by Riesz representation theorem. We want to consider measures on $\mathcal{C}_{\leq\ell} ^n$, to do so, take $\mu_0 \in C (\widehat{\mathcal{C}}_{\leq\ell} ^n,\R)^*$ and define $\mu (A)=\mu_0 (J(A))$, for $A\subseteq \mathcal{C}_{\leq\ell} ^n$ a Borel set, i.e. $\mu = (J^{-1})_\# (\mu_0)$. In order to properly define $\mu$ like this, we have to ensure that $J(A)$ is a Borel set, i.e. show that  $J^{-1}: J(\mathcal{C}_{\leq\ell} ^n) \longrightarrow\mathcal{C}_{\leq\ell} ^n$ is measurable.

From the continuity of $J$ we find that the sets $J(\{\gamma\in \mathcal{C}_{\leq\ell} ^n\: : \: \gamma(0)\leq n   \})  $ are compact in $C([0,\ell],\R^n_\infty)$, and then $J^{-1}: J(\{\gamma\in \mathcal{C}_{\leq\ell} ^n\: : \: \gamma(0)\leq n   \}) \longrightarrow \mathcal{C}_{\leq\ell} ^n $ is continuous for every $n\in \N$. Since $ (J^{-1})^{-1}(A)= J(A) = \bigcup J(A\cap\{\gamma\in \mathcal{C}_{\leq\ell} ^n\: : \: \gamma(0)\leq n   \})$ for $A\subseteq \mathcal{C}_{\leq\ell} ^n$ a Borel set, we deduce that $J^{-1}: J(\mathcal{C}_{\leq\ell} ^n) \longrightarrow\mathcal{C}_{\leq\ell} ^n$ is measurable.

\begin{remark}
    The bijectivity of $J:\mathcal{C}_{\leq\ell} ^n\longrightarrow J(\mathcal{C}_{\leq\ell} ^n)$ simplifies the measures in $C (\widehat{\mathcal{C}}_{\leq\ell} ^n,\R)^*$, since for $\mu_0 \in C (\widehat{\mathcal{C}}_{\leq\ell} ^n,\R)^*$, we have $\mu_0 (A) = \mu_0 (J(J^{-1}(A))) + C\delta_\infty = (J_\# \mu )(A) + C\delta _\infty $. 
\end{remark}

\begin{lemma}
\label{L:BanAla}
    Let $\{\mu_\alpha\}$ be a bounded family of measures on $\mathcal{C}_{\leq\ell} ^n$. Then there exists a measure $\mu_0$ on $\widehat{\mathcal{C}}_{\leq\ell} ^n$ and a sequence $\{\alpha_j\}$ such that $J_\#\mu_{\alpha_j}$ converges to $\mu_0$ in the weak* topology.
\end{lemma}
\begin{proof}
We have   $\{J_\#\mu_\alpha\}\subseteq  C (\widehat{\mathcal{C}}_{\leq\ell} ^n,\R)^*$. Since $\Var (\mu_\alpha)<K<\infty$, we can apply Banach-Alaoglu Theorem to extract a subsequence $\alpha_j$ and find a measure $\mu_0$ satisfying $J_\#\mu_{\alpha_j}\longrightarrow \mu_0$, in the weak* topology.
\end{proof}

Now we shift the attention to the curves acting on $\mathcal{D}(\R^n,\R^n)$. Let $\gamma\in \mathcal{C}_{\leq\ell} ^n$ and $\varphi\in \mathcal{D}(\R^n,\R^n)$, then
\[
    [\gamma](\varphi)=\int _0 ^\ell \pe{\varphi\circ \gamma}{\dot\gamma} d\L_1.
\]

With this we can see that any curve defines a finite vector measure on $\R^n$, since $[\gamma]=(\gamma_\# (\dot\gamma_j \L_1))_{j=1}^n$. The variation norm of a curve is not straightforward to compute, we can characterise it under the condition that $\Var [\gamma]=\text{length}(\gamma)$. We talk more about the relevance of this condition later, in xxx.

\begin{lemma}
    \label{L:curvenorm} 
    Let $\gamma\in \mathcal{C}_{\leq\ell} ^n$ with $\Var [\gamma]=\text{length}(\gamma)$. Then, for $A$ a Borel set,
    \[
        \vnorm{\gamma}(A)= \L_1 (\gamma^{-1}(A)).
    \]
\end{lemma}

\begin{proof}
    Let $E\subseteq \R^n$ be open and $\varphi\in \mathcal{D}(E,\R^n)$ with $\rnnorm{\varphi} \leq 1$, then by Lemma~\ref{L:norm},
    \[
        \vnorm{\gamma} (E)\leq [\gamma](\varphi) = \int \pe{\varphi\circ \gamma}{\dot\gamma}d\L_1,
    \]
    and applying Cauchy-Schwarz we have
    \[
        \vnorm{\gamma} (E)\leq \int _{\gamma^{-1}(E)} \pnorm{\varphi(\gamma(t))}\pnorm{\dot\gamma(t)} dt \leq \L_1 (\gamma^{-1}(E)).
    \]
    Then, for $E=\R^n$ we have $\text{length}(\gamma) = \Var [\gamma] \leq \L_1 (\gamma^{-1}(\R^n))\leq \text{length}(\gamma)$. We have the equality for $E=\R^n$ and  $ \vnorm{\gamma}(E)\leq\L_1 (\gamma^{-1}(E))$ for $E$ open. Apply $ \vnorm{\gamma}$ to $E=\R^n \setminus (\R^n\setminus E)$ to obtain the equality. The regularity of $ \vnorm{\gamma}$ extends the equality to every Borel set.
\end{proof}

} 

\section{Proof for divergence free charges}

A main ingredient in this case is a version of Liouville's theorem that flows of divergence free smooth vector fields are volume preserving.
Let us recall that a linearly bounded vector field $\phi=(\phi_1,\ldots,\phi_n)\in C^1(\R^n)^n$, i.e., $|\phi(x)|\le c(1+|x|)$ for some constant
$c>0$ and all $x\in\R^n$, has a globally defined flow $u\in C^1(\R\times\R^n)$, i.e., $u(\cdot,x)$ is the unique solution of the initial
value problem $\partial_tu(t,x)=\phi(u(t,x))$ and $u(0,x)=x$. Depending on cultural context, this theorem is called after
some subset of $\{$Cauchy, Lipschitz, Picard, Lindel\"of$\}$. For a proof which deduces existence as well as the differentiability of $u$
from the inverse function theorem in Banach spaces, we refer to the book of Duistermaat and Kolk \cite[appendix B]{DuKo}.
The flow equation $u(t+s,x)=u(t,u(s,x))$ implies that all $u(t,\cdot):\R^n\to\R^n$ are diffeomorphisms with inverse $u(-t,\cdot)$.

\medskip
Every locally finite positive Borel measure $\varrho$ on $\R^n$ then yields a local charge $\phi\cdot \varrho$ which has a distributional divergence.
For $\varrho=\lambda^n$ we have $\Div(\phi\cdot\lambda^n)= \Div(\phi)\cdot\lambda^n$ where $\Div(\phi)=\sum_{k=1}^n\partial_k\phi_k$ is the classical
divergence of $\phi$.

\begin{proposition}[Liouville's theorem]
Let $\phi\in C^1(\R^n)^n$ be a linearly bounded vector field with corresponding flow $u$. Then every locally finite positive
Borel measure $\varrho$ with $\Div(\phi\cdot\varrho)=0$ is invariant under $u$, i.e., $\varrho_t=u(t,\cdot)_{\#}(\varrho)$ is independent of $t\in\R$.
\end{proposition}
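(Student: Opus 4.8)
The plan is to show that the derivative in $t$ of the measures $\varrho_t = u(t,\cdot)_\#(\varrho)$ vanishes in the weak sense, i.e., that $t\mapsto \int_{\R^n} \psi\, d\varrho_t$ is constant for every test function $\psi\in\DD(\R^n)$. By the change of variables for push-forwards, $\int \psi\, d\varrho_t = \int \psi(u(t,x))\, d\varrho(x)$, so I would differentiate under the integral sign. Since $u\in C^1(\R\times\R^n)$ and $\partial_t u(t,x) = \phi(u(t,x))$, the chain rule gives $\frac{d}{dt}\psi(u(t,x)) = \langle \nabla\psi(u(t,x)), \phi(u(t,x))\rangle = \langle (\nabla\psi\cdot\phi)(u(t,x)),1\rangle$, which is bounded uniformly (in $x$ over the compact support considerations and locally in $t$) because $\psi$ has compact support, $\nabla\psi$ is bounded, and on the set where $u(t,\cdot)$ lands in $\supp\psi$ the field $\phi$ is bounded; dominated convergence then justifies differentiating under the integral. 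Hence
\[
\frac{d}{dt}\int_{\R^n}\psi\, d\varrho_t = \int_{\R^n} \langle\nabla\psi, \phi\rangle(u(t,x))\, d\varrho(x) = \int_{\R^n}\langle\nabla\psi,\phi\rangle\, d\varrho_t.
\]

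Now the right-hand side is exactly $-\langle \Div(\phi\cdot\varrho_t),\psi\rangle$ — provided we already know that $\varrho_t$ itself is divergence-free whenever $\varrho$ is. So the key reduction is: the hypothesis $\Div(\phi\cdot\varrho)=0$ propagates to $\Div(\phi\cdot\varrho_t)=0$ for all $t$. I would establish this first, as a lemma inside the proof: for the distributional divergence, $\langle\Div(\phi\cdot\varrho_t),\psi\rangle = -\int \langle\nabla\psi,\phi\rangle\,d\varrho_t = -\int \langle\nabla\psi,\phi\rangle(u(t,x))\,d\varrho(x)$, and one checks that $\langle\nabla\psi(\cdot),\phi(\cdot)\rangle\circ u(t,\cdot) = \langle \nabla(\psi\circ u(t,\cdot)), \phi\rangle$ up to the Jacobian correction that comes precisely from the flow being generated by $\phi$. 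More cleanly: because $u(t,\cdot)$ is a diffeomorphism and $u(s+t,\cdot) = u(s,\cdot)\circ u(t,\cdot)$, the group property lets one transport the divergence-free condition; alternatively one can argue that $g(t) := \langle \Div(\phi\cdot\varrho_t),\psi\rangle$ satisfies a linear ODE in $t$ with $g(0)=0$ and hence $g\equiv 0$, using that $\frac{d}{dt}\varrho_t$ in the weak sense equals $-\Div(\phi\cdot\varrho_t)$ from the computation above.

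The cleanest self-contained route, which is what I would actually write, combines both observations into one: define $h_\psi(t) = \int_{\R^n}\psi\,d\varrho_t = \int_{\R^n}\psi(u(t,x))\,d\varrho(x)$ and, more generally, $h_{\psi,s}(t) = \int \psi(u(s,u(t,x)))\,d\varrho(x) = \int\psi(u(s,y))\,d\varrho_t(y)$. Differentiating in $t$ as above yields $h_\psi'(t) = \int\langle\nabla\psi,\phi\rangle\,d\varrho_t$. To see this vanishes, note $\int\langle\nabla\psi,\phi\rangle\,d\varrho_t = \langle\phi\cdot\varrho_t, \nabla\psi\rangle = -\langle\Div(\phi\cdot\varrho_t),\psi\rangle$, and by the group property $\varrho_t = u(t,\cdot)_\#\varrho$ has divergence computed via $-\int\langle\nabla\psi,\phi\rangle(u(t,x))d\varrho(x)$; applying the same differentiation identity backwards shows $t\mapsto\langle\Div(\phi\cdot\varrho_t),\psi\rangle$ is itself the derivative of the constant-in-$t$... — to avoid circularity I would instead simply invoke: for each fixed $s$, $\frac{d}{dt}\big|_{t=0} h_{\psi,s}(t) = \int\langle\nabla(\psi\circ u(s,\cdot)),\phi\rangle\,d\varrho = -\langle\Div(\phi\cdot\varrho),\psi\circ u(s,\cdot)\rangle = 0$ by hypothesis, since $\psi\circ u(s,\cdot)\in\DD(\R^n)$ (it has compact support because $u(s,\cdot)$ is a homeomorphism pulling back the compact $\supp\psi$ to a compact set, and it is $C^1$). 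But $h_{\psi,s}(t) = h_{\psi\circ u(s,\cdot)}(t)$ evaluated against $\varrho_t$, and the flow identity $u(s,u(t,x)) = u(s+t,x)$ gives $h_{\psi,s}(t) = h_\psi(s+t)$, so $h_\psi'(s) = \frac{d}{dt}|_{t=0} h_\psi(s+t) = \frac{d}{dt}|_{t=0} h_{\psi,s}(t) = 0$ for every $s\in\R$. Hence $h_\psi$ is constant, $\int\psi\,d\varrho_t = \int\psi\,d\varrho$ for all $\psi\in\DD(\R^n)$ and all $t$, and since $\DD(\R^n)$ determines locally finite measures, $\varrho_t = \varrho$.

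The main obstacle I anticipate is the justification of differentiation under the integral sign — specifically producing an integrable (with respect to $\varrho$) dominating function valid uniformly for $t$ in a neighborhood of any fixed $s_0$. This needs: compactness of $\supp\psi$ plus the fact that $\{x : u(t,x)\in\supp\psi,\ |t-s_0|\le 1\}$ is contained in a fixed compact set $K$ (from continuity of $u$ and properness of $u(t,\cdot)^{-1}$, or from the linear growth bound on $\phi$ yielding Grönwall estimates $|u(t,x)|\le (|x|+C|t|)e^{C|t|}$ in reverse), so that $\varrho(K)<\infty$ by local finiteness and $|\langle\nabla\psi,\phi\rangle(u(t,x))|$ is bounded by a constant times $\mathbf{1}_K(x)$. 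The verification that $\psi\circ u(s,\cdot)\in\DD(\R^n)$ is the other point requiring a line of care, but it follows from $u(s,\cdot)$ being a $C^1$ diffeomorphism.
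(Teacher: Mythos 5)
Your final argument is essentially the paper's own proof: both reduce $\frac{d}{ds}\int\psi\,d\varrho_s$ to the pairing of $\Div(\phi\cdot\varrho)$ with the flowed test function $\psi\circ u(s,\cdot)$ (the paper via the pointwise identity $\partial_t\big(\psi\circ u\big)(t,y)=\langle\nabla_y(\psi\circ u)(t,y),\phi(y)\rangle$ obtained from the backward flow, you via the semigroup shift $h_\psi(s+t)=h_{\psi,s}(t)$), and both justify differentiation under the integral by the same compactness/domination argument using $u(J\times\supp\psi)$. One small correction: $\psi\circ u(s,\cdot)$ is only $C^1$ with compact support, not an element of $\DD(\R^n)$, since $u$ is merely $C^1$; this is harmless because the pairing $-\int\langle\nabla g,\phi\rangle\,d\varrho$ vanishes for all compactly supported $C^1$ functions $g$ by mollification, and the paper uses exactly the same extension.
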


\begin{proof}
    For $\varphi\in \mathcal{D}(\R^n)$, it is enough to show that
    \[
    \int\varphi d\varrho_t= \int\varphi(u(t,x))d\varrho(x)
    \]
is independent of $t$ which we will show by differentiation of this parameter integral for $\psi(t,x)=\varphi(u(t,x))$. 
The flow equation yields $\psi(t,u(-t,x))=\varphi(x)$, and hence
    \[
        0=\partial_t\left(\psi(t,u(-t,x))\right)= \partial_t \psi(t,u(-t,x)) -\< \nabla_x\psi(t,u(-t,x)), \phi (u(-t,x))\>.
    \]
    The surjectivity of $u(-t,\cdot)$ thus implies for all $(t,y)\in\R\times\R^n$
      \[
     \partial_t\psi(t,y)= \< \nabla_y\psi(t,y),\phi (y)\>.
     \]

    To justify the differentiability under the integral we fix a compact interval $J=[-m,m]$ and show that $\partial_t\psi(t,x)$ is dominated by some
    $\rho$-integrable function $g(x)$ which is independent of $t\in J$.   
 For $t\in J$, we have
    \[
       \supp (\psi(t,\cdot))\subseteq \{x\in\R^n:u(t,x)\in \supp(\varphi)\} =u(-t,\supp (\varphi)) \subseteq u(J\times \supp(\varphi)),
    \]
 where this last set $K$ is compact by the continuity of $u:\R\times\R^n\to\R^n$. The continuity of $\partial_t\psi$ thus implies that
 $|\partial\psi_t(t,\cdot)|$ is bounded by a constant (independent of $t$) times the indicator function of $K$ which is
 $\varrho$-integrable.
 
We can thus differentiate with respect to $t\in J$ under the integral and get
    \begin{align*}
        \partial_t\int \psi(t,x)d\varrho(x)  &= \int \partial_t\psi(t,x) d\varrho(x) 
         =\int \<\nabla_x\psi(t,x), \phi (x)\> d\varrho (x)
        \\ & =-\<\Div(\phi\cdot\varrho),\psi(t,\cdot)\>=0.\qedhere
    \end{align*}
    \end{proof}

\begin{proof}[Proof of theorem \ref{ThA}]  
    Let $\mu=(\mu_1,\ldots,\mu_n)\neq 0$ be a divergence free charge and $\ell>0$. We take a smooth and strictly positive function $k:\R^n\to\R$
    with integral $1$, e.g., a Gaussian kernel, and define the convolution $\tilde\mu=\mu * k=(\mu_1*k,\ldots,\mu_n*k)$ which is a smooth vector field
    with $|\mu*k|\le \vnorm\mu *k$ on $\R^n$. Since $\vnorm\mu$ is a positive measure and $k$ is strictly positive also $\vnorm\mu *k$ is strictly positive
    and  we can define $\phi=\mu*k/ (\vnorm{\mu}*k)$. This is a smooth vector field which is even bounded and hence linearly bounded
    so that the proposition applies to the flow $u$ of $\phi$ and the measure $\varrho=(\vnorm\mu*k)\cdot\lambda^n$ (which is finite as the convolution of
    $\vnorm\mu$ with the probability measure $k\cdot\lambda^n$ so that $\varrho(\R^n)=\Var(\mu)$) because
    \[
    \Div(\phi\cdot \varrho)= \Div((\mu*k)\cdot\lambda^n)=(\Div(\mu)*k)\cdot\lambda^n=0.
    \]


%


We will show now that $\tilde\mu=\mu*k$ decomposes into the curves $\gamma_x:[0,\ell]\to\R^n$, $t\mapsto u(t,x)$. 
Indeed, for $\varphi\in\mathcal{D}(\R^n)^n$, the invariance of $\varrho$ under $u(t,\cdot)$ yields
    \begin{align*}
        \int_{\R^n} [\gamma_x] (\varphi)d\varrho(x) &= \int_{\R^n} \int_0^\ell \<\varphi(u(t,x)),\partial_tu (t,x)\> dt d\varrho(x)
        \\
         &= \int _0 ^\ell \int_{\R^n} \< \varphi(u(t,x)),\phi(u(t,x))\> d \varrho(x) dt
         \\
        &= \int _0 ^\ell \int_{\R^n} \< \varphi,\phi\> d \varrho dt =\ell \int_{\R^n} \<\varphi, \phi\> (\vnorm{\mu}*k) d \lambda^n
        \\
        &= \ell \<\tilde\mu,\varphi\>.
    \end{align*}
  Form $|\phi|\le 1$ we get $\gamma_x\in K_\ell$ so that the image measure $\nu$ of $\varrho/\ell$ under the map 
  $\Gamma:\R^n\to K_\ell$, $x\mapsto \gamma_x$
  satisfies $\nu(K_\ell)=\Var(\mu)/\ell$ and 
  \[
  \tilde\mu=\int_{K_\ell}[\gamma] d\nu(\gamma).
  \]

 For $\e\in(0,1)$ we set $k_\e (x)=\frac1{\e^n} k(\frac{x}{\e})$ and obtain measures $\nu_\e$ on $K_\ell$ with 
 $\nu_\e(K_\ell)=\Var(\mu)/\ell$ and
    \[
        \mu*k_\e =\int _{K_\ell} [\gamma] d \nu_\e (\gamma). 
    \]
For $\eps\to 0$ we have $\mu*k_\e\to \mu$ in the sense of distributions, i.e., $\<\mu*k_\e,\varphi\>\to\<\mu,\varphi\>$ for all $\varphi\in\DD(\R^n)^n$.
On the other hand we consider $\nu_\e$ as measures on $\hat K_\ell$ with $\nu_\e(\{\infty\})=0$. Then Alaglou's (or, in this
context, Prokhorov's) theorem yields a sequence $\eps_j\to 0$ and a positive measure $\nu$ on $\hat K_\ell$ with $\nu_{\e_k}\to \nu$ in the
weak$^*$-topology, i.e., $\int fd\nu_{\e_k}\to\int f d\nu$ for all $f\in C(\hat K_\ell)$.

For each $\varphi\in \DD(\R^n)^n$, we will show 
\[
\<\mu,\varphi\> =\int_{K_\ell} [\gamma](\varphi) d\nu(\gamma).
\]
This follows from the weak$^*$-convergence if we prove the continuity of 
$\hat K_\ell\to\R$, $\gamma\mapsto [\gamma](\varphi)=\int_{[0,\ell]}\<\varphi\circ\gamma,\dot\gamma \> d\lambda^1$
(which is $0$ for the curve with constant value $\infty$ since $\varphi$ has compact support).

For a uniformly convergent sequence $\gamma_j\to\gamma$ it is enough to find a subsequence $j(i)$ with $[\gamma_{j(i)}](\varphi)\to[\gamma](\varphi)$.
The case of the curve with constant value $\infty$ is clear since then $\varphi\circ\gamma_j=0$ for $j$ big enough. Otherwise,
the sequence $\gamma_j$ is uniformly bounded. Since $|\dot\gamma_j|\le 1$ the sequence $\dot\gamma_j$ is bounded in the 
Hilbert space $L_2([0,\ell])^n$ and has thus a weakly convergent subsequence $\dot\gamma_{j(i)}\to g$ for some $g\in L_2([0,\ell])^n$.
Since $\gamma_j$ are absolutely continuous we obtain for $t\in [0,\ell]$
\[
\gamma(t)=\lim_{i\to\infty} \gamma_{j(i)}(0)+\int_{[0,t]} \dot\gamma_{j(i)} d\lambda^1 = \gamma(0)+\int_{[0,t]} g d\lambda^1,
\]
which implies $\dot\gamma=g$ $\lambda^1$-a.e. We therefore obtain
\[
[\gamma_{j(i)}](\varphi)=\int_{[0,\ell]} \<\varphi\circ\gamma_{j(i)}-\varphi\circ\gamma,\dot\gamma_{j(i)}\>d\lambda^1+\int_{[0,\ell]}
\< \varphi\circ\gamma,\dot\gamma_{j(i)}\> d\lambda^1 \to [\gamma](\varphi)
\]
because the first integrand converges uniformly to $0$ and the second integral converges because of the weak convergence in  $L_2([0,\ell])^n$ of
$\dot\gamma_{j(i)}\to\dot\gamma$.

\medskip
We will next show that $\vnorm\mu=\int\vnorm{[\gamma]}d\nu(\gamma)$ and that $\nu$-almost all curves have length $\ell$
(till know we only know that their length is $\le\ell$). For an open set $E\subseteq\R^n$ and $\varphi\in\DD(E)^n$ with $|\varphi|\le 1$ we have
\[
\<\mu,\varphi\>=\int_{K_\ell} [\gamma](\varphi)d\nu(\gamma)\le \int_{K_\ell} \vnorm{[\gamma]}(E) d\nu(\gamma)
\]
and hence $\vnorm\mu(E)\le \int_{K_\ell} \vnorm{[\gamma]}(E) d\nu(\gamma)$. For $E=\R^n$ this yields
\[
\Var(\mu) \le \int_{K_\ell} \Var{[\gamma]} d\nu(\gamma)\le \int_{K_\ell} L(\gamma)d\nu(\gamma)\le \ell\nu(K_\ell)\le\Var(\mu)
\]
so that none of these inequalities can be strict. This implies $\Var{[\gamma]}=L(\gamma)=\ell$ for $\nu$-almost all $\gamma\in K_\ell$, $\nu(K_\ell)=\Var(\mu)/\ell$,
and also that $\vnorm\mu=\int\vnorm{[\gamma]}d\nu(\gamma)$ by the same argument as in (\ref{eom}) (two positive
Borel measures $\rho$ and $\varrho$ with $\rho(E)\le\varrho(E)$ for all open sets and $\rho(\R^n)=\varrho(\R^n)<\infty$ are equal).

To show finally that $\nu$-almost every curve has values in the support $S$ of $\mu$ we first recall that
\[
\vnorm{[\gamma]}(E) \le \int_{\{\gamma\in E\}} |\dot\gamma|d\lambda^1.
\]
For $\gamma\in K_\ell$ with $\Var([\gamma])=L(\gamma)=\ell$ we thus get 
$|\dot\gamma|=1$ $\lambda^1$-a.e. and equality in the previous line. For $E=S^c$ we obtain
\[
0=\vnorm\mu (E) =\int_{K_\ell} \lambda^1(\{\gamma\in E\}) d\nu(\gamma)
\]
which implies $\lambda^1(\{\gamma\in E\})=0$ for $\nu$-almost all $\gamma$. The \emph{open} sets $\{\gamma\in E\}$ are thus empty
for $\nu$-almost all $\gamma$.
\end{proof}

The following additional property of the decomposition is not needed for the proof of theorem \ref{ThC} or applications in approximation theory.

\begin{proposition}
\label{ThA2}
    The decomposition in theorem \ref{ThA} also satisfies
    \[
        \vnorm{\mu}= \ell \int_{K_\ell} \delta_{\gamma(0)} d\nu(\gamma)= \ell \int_{K_\ell} \delta_{\gamma(\ell)} d\nu(\gamma).
    \]
\end{proposition}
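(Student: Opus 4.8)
The plan is to use the explicit construction of $\nu$ from the proof of Theorem \ref{ThA} and pass the measure-preservation through the same limiting procedure used there. Recall that $\nu$ arose as a weak$^*$-limit of the measures $\nu_\e$, where $\nu_\e$ is the image of $\varrho_\e/\ell$ under $\Gamma_\e:\R^n\to K_\ell$, $x\mapsto\gamma_x^\e=u_\e(\cdot,x)$, and where $\varrho_\e=(\vnorm\mu*k_\e)\cdot\lambda^n$ is invariant under the flow $u_\e$ of $\phi_\e=\mu*k_\e/(\vnorm\mu*k_\e)$. For a fixed $t_0\in[0,\ell]$ the evaluation map $\ev_{t_0}:\hat K_\ell\to\hat\R^n$, $\gamma\mapsto\gamma(t_0)$, is continuous, so for $\varphi\in\DD(\R^n)$ the function $\gamma\mapsto\varphi(\gamma(t_0))$ (extended by $0$ at the constant curve $\infty$) is continuous on $\hat K_\ell$. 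Thus $\int_{K_\ell}\varphi(\gamma(t_0))d\nu(\gamma)=\lim_j\int_{K_\ell}\varphi(\gamma(t_0))d\nu_{\e_j}(\gamma)$.

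Next I would compute the $\nu_\e$-integral directly: by the definition of $\nu_\e$ as a push-forward,
\[
\int_{K_\ell}\varphi(\gamma(t_0))d\nu_\e(\gamma)=\frac1\ell\int_{\R^n}\varphi(u_\e(t_0,x))d\varrho_\e(x)=\frac1\ell\int_{\R^n}\varphi\,d(u_\e(t_0,\cdot)_\#\varrho_\e)=\frac1\ell\int_{\R^n}\varphi\,d\varrho_\e,
\]
where the last equality is exactly Liouville's theorem (the Proposition) applied to the flow $u_\e$ and the invariant measure $\varrho_\e$. Now $\varrho_\e=(\vnorm\mu*k_\e)\cdot\lambda^n\to\vnorm\mu$ weakly as $\e\to0$ (convolution with the approximate identity $k_\e\cdot\lambda^n$), so $\frac1\ell\int\varphi\,d\varrho_{\e_j}\to\frac1\ell\int\varphi\,d\vnorm\mu$. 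Combining the two limits gives $\int_{K_\ell}\varphi(\gamma(t_0))d\nu(\gamma)=\frac1\ell\int\varphi\,d\vnorm\mu$ for all $\varphi\in\DD(\R^n)$, i.e.\ $\ell\int_{K_\ell}\delta_{\gamma(t_0)}d\nu(\gamma)=\vnorm\mu$. Taking $t_0=0$ and $t_0=\ell$ yields both claimed identities.

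There is one subtlety to address carefully, which I expect to be the main (mild) obstacle: the interchange of the two limits — the one in $j$ defining $\nu$ as weak$^*$-limit of $\nu_{\e_j}$, and the one in $\e$ taking $\varrho_\e\to\vnorm\mu$ — is not an interchange at all here, because the computation above shows $\int\varphi(\gamma(t_0))d\nu_{\e_j}$ equals $\frac1\ell\int\varphi\,d\varrho_{\e_j}$ as an exact identity \emph{for each fixed $j$}; both sides then converge along the single sequence $\e_j\to0$ and the identity passes to the limit. So the only thing to check with care is that $\gamma\mapsto\varphi(\gamma(t_0))$ genuinely extends to an element of $C(\hat K_\ell)$ (continuity of $\ev_{t_0}$ in the uniform topology is immediate, and the value at the constant curve $\infty$ is $0$ since $\supp\varphi$ is compact, matching the limit of $\varphi(\gamma_j(t_0))$ for $\gamma_j\to\infty$ uniformly), and that $\nu_{\e_j}(\{\infty\})=0$ so that the $\hat K_\ell$-integral and the $K_\ell$-integral agree — both of which were already established in the proof of Theorem \ref{ThA}. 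Finally I would remark that the equality $\vnorm\mu=\int\vnorm{[\gamma]}d\nu$ from Theorem \ref{ThA}, together with $\vnorm{[\gamma]}(\R^n)=L(\gamma)=\ell$ for $\nu$-a.e.\ $\gamma$, gives an independent consistency check: integrating the new identity over $\R^n$ returns $\Var(\mu)=\ell\,\nu(K_\ell)$, as it must.
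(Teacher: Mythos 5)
Your argument is correct, and within the shared framework (exploiting that $\nu$ is the weak$^*$ limit of the push-forwards $\nu_{\e_j}=(\Gamma_{\e_j})_\#(\varrho_{\e_j}/\ell)$) it takes a genuinely different route from the paper. The paper never identifies $\ell\int\delta_{\gamma(0)}d\nu_\e$ with $\varrho_\e$ beyond the trivial fact $e_0\circ\Gamma_\e=\mathrm{id}$, and it avoids proving the weak convergence $\varrho_\e\to\vnorm\mu$: instead it tests against vector fields $\varphi\in\DD(E)^n$, $|\varphi|\le1$, to get the one-sided bound $\vnorm\mu(E)\le\ell\int\delta_{\gamma(0)}(E)\,d\nu(\gamma)$ for open $E$, upgrades this to equality of measures via equality of total masses and outer regularity, and then obtains the formula at the endpoint $t=\ell$ from $0=\Div(\mu)=\int(\delta_{\gamma(0)}-\delta_{\gamma(\ell)})d\nu(\gamma)$. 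You instead use Liouville's invariance of $\varrho_\e$ under the flow $u_\e$ to get the exact identity $\int\varphi(\gamma(t_0))\,d\nu_\e=\frac1\ell\int\varphi\,d\varrho_\e$ for every fixed $t_0\in[0,\ell]$ and every $\e$, and then pass to the limit on both sides along the sequence $\e_j$, using the continuity of $\gamma\mapsto\varphi(\gamma(t_0))$ on $\hat K_\ell$ (with value $0$ at the constant curve $\infty$, so a possible atom of $\nu$ at $\infty$ is harmless) and the standard approximate-identity fact $\varrho_\e\to\vnorm\mu$ weakly. What your approach buys: a direct identification with no sup/regularity step, no need for the divergence computation at $t=\ell$, and in fact the stronger conclusion $\vnorm\mu=\ell\int_{K_\ell}\delta_{\gamma(t)}\,d\nu(\gamma)$ for \emph{every} $t\in[0,\ell]$. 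What the paper's route buys: it only needs facts already on the table in the proof of Theorem \ref{ThA} (the sup description of $\vnorm\mu(E)$, $\vnorm{\mu*k_\e}\le\vnorm\mu*k_\e$, and $\nu(K_\ell)=\Var(\mu)/\ell$), so it sidesteps both the mollifier convergence and any renewed appeal to Liouville's theorem. The two small points you flag (continuity of the evaluation functional on $\hat K_\ell$ and $\nu_{\e_j}(\{\infty\})=0$) are indeed the only things to check, and both are as routine as you say.
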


\begin{proof}
We exploit the construction of $\nu$ as a weak limit of $\nu_\e$ defined as the push forward $\Gamma_\#(\varrho_\e/\ell)$ for
 $\varrho_\e=(\vnorm\mu*k_\e)\cdot\lambda^n$
under the map $\Gamma(x)=u(\cdot,x)$. For the evaluation $e_0:K_\ell\to\R^n$, $\gamma\mapsto\gamma(0)$ we thus have
$e_0\circ \Gamma(x)=u(0,x)=x$. 

For an open set $E$ and $\varphi\in \DD(E)^n$ with $|\varphi|\le 1$ we have 

    \begin{align*}
       \<\mu,\varphi\> &=  \lim _{k\to\infty} \<\mu_{\e_k},\varphi\> \le 
        \limsup_{k\to\infty} \int \pnorm{\varphi(x)} d\vnorm{\mu_{\e_k}}(x)   
         \le \limsup_{k\to\infty} \int \pnorm{\varphi(x)} d\varrho_{\e_k}(x)  \\ &
         = \limsup_{k\to\infty} \int \pnorm{\varphi\circ e_0\circ\Gamma} d\varrho_{\e_k}  
         =\ell \limsup_{k\to\infty} \int \pnorm{\varphi \circ e_0} d\nu_{\e_k}  
        \\ & =\ell \int |\varphi\circ e_0| d\nu 
         \le \ell\int \chi_E\circ e_0 d\nu =\ell \int \delta_{\gamma(0)}(E) d\nu(\gamma).
    \end{align*}
Taking the supremum we thus get $\vnorm\mu (E) \le \ell \int \delta_{\gamma(0)}(E)d\nu(\gamma)$. For $E=\R^n$ we have equality
because $\nu(K_\ell)=\Var(\mu)/\ell$ and by the outer regularity argument used already twice we conclude that we even have
equality for all Borel sets.

 The second formula in the proposition follows from
    \[
        0=\Div(\mu)= \int \Div ([\gamma]) d\nu (\gamma)=\int (\delta_{\gamma(0)}-\delta_{\gamma(\ell)} )d\nu (\gamma). \qedhere
    \]
\end{proof}


\section{Charges with the divergence being a measure}

In this section we prove theorem \ref{ThC} following Smirnov's arguments more closely than in the previous section. 
The necessity part follows from $\Div(\int_{K_\ell}[\gamma]d\nu(\gamma))=\int_{K_\ell} (\delta_{\gamma(0)}-\delta_{\gamma(\ell)})d\nu(\gamma)$.

Let now $\mu=(\mu_1,\ldots,\mu_n)$ be an 
$n$-dimensional charge whose divergence is a signed measure. We define an $(n+1)$-dimensional charge
\[
\mu^+=\big(\mu\otimes (\delta_0-\delta_\ell), -\Div(\mu)\otimes \lambda^1|_{[0,\ell]}\big).
\]
The product of a charge and a signed measure is meant component-wise and the product of two signed measures can be defined
using the polar decomposition: For $\varrho= r\cdot \vnorm\varrho$ and $\sigma=s\cdot\vnorm\sigma$ the product is
$\varrho \otimes \sigma=(r\otimes s)\cdot (\vnorm\varrho\otimes\vnorm\sigma)$ with $r\otimes s(x,t)=r(x)s(t)$.

The restriction of $\mu^+$ to the plane $\R^n\times\{0\}$ gives back $\mu$, i.e., for $A\in\B^n$ and the projection $\pi:\R^{n+1}\to\R^n$ onto the first $n$ components we have
\[
\pi\circ\mu^+(A\times\{0\})=\mu(A).
\]

Denoting the points of $\R^{n+1}$ as $(x,t)$ with $x\in\R^n$ and $t\in\R$ we write
$\nabla \psi=(\nabla_x \psi,\partial_t \psi)$. It is easy to see that $\mu^+$ is divergence free: For $\psi\in\DD(\R^{n+1})$ we have
    \begin{align*}
        -\<\Div (\mu^+),\psi\> &= \<\mu^+,\nabla \psi\> 
        = \<\mu\otimes (\d_0-\d_\ell),\nabla_x \psi\> - \<\Div(\mu) \otimes \lambda^1|_{[0,\ell]},\partial_t\psi\>
        \\
        &= \<\mu,\nabla_x \psi(\cdot,0)-\nabla_x \psi(\cdot,\ell)\> - \<\Div(\mu),  \int_{[0,\ell]} \partial_t\psi(\cdot,s)d\lambda_1(s)\>
        \\
        &= \<\Div(\mu),\psi (\cdot,\ell) -\psi(\cdot,0)\> -\<\Div(\mu),\psi (\cdot,\ell) -\psi(\cdot,0)\> =0.
    \end{align*}
 Theorem \ref{ThA} yields a finite positive measure $\nu^+$ on $K_\ell^+$ (the analogue of $K_\ell$ for curves in
 $\R^{n+1}$) with
    \[
        \mu^+=\int_{K_\ell^+} [\gamma] d\nu^+ (\gamma),\quad
        \vnorm{\mu^+}
        =\int_{K_\ell^+} \vnorm{\gamma} d\nu^+(\gamma),
    \]
and $\nu^+$-almost all $\gamma\in K_\ell^+$ satisfy $\Var([\gamma])=L(\gamma)=\ell$.

In order to get a representation of $\mu$ we will show for $\nu^+$-almost all curves $\gamma$ that $\{\gamma\in \R^n\times\{0\}\}$
is an interval $[\alpha,\beta]$ and that the curves $\tilde\gamma(t)=\gamma(\max\{\alpha,\min\{t,\beta\}\})$ (which rest at $\gamma(\alpha)$
for $t\in [0,\alpha]$, then follow $\gamma$ till they stay at $\gamma(\beta)$)
represent $\mu$.
We first analyse the behaviour of the curves in the open \emph{strip} $S=\R^n\times (0,\ell)$:
We have 
\[
\mu^+|_S=(0,-\Div(\mu)\otimes \lambda^1|_{(0,\ell)}) \text{ and } \vnorm{\mu^+|_S}=\vnorm{\Div(\mu)}\otimes\lambda^1|_{(0,\ell)}
\]
so that the polar decomposition is $\mu^+|_S= f\cdot \vnorm{\Div(\mu)}\otimes\lambda^1|_{(0,\ell)}$ where $f(x,t)=(0,g(x))$ with
$-\Div(\mu)=g\cdot \vnorm{\Div(\mu)}$. We set
\[
E_\pm=\{x\in\supp(\Div(\mu)): g(x)=\pm 1\}
\]
so that $\supp(\Div(\mu))=E_+\cup E_-$ because $|g(x)|=1$. 

We claim for $\nu^+$-almost all curves and $\lambda^1$-almost all $t\in [0,\ell]$
that $\gamma(t)\in E_\pm$ implies $\dot\gamma_{n+1}(t)=\pm 1$.

Let $\chi_+$ be the indicator function of the set $E_+ \times (0,\ell)$. Since the last component of $\dot\gamma$ satisfies 
$\dot\gamma_{n+1}\in [-1,1]$ for $\gamma\in K_\ell^+$ we have 
    \begin{align*}
        \nu^+ \otimes & \lambda^1|_{(0,\ell)} (\{  (\gamma,t):  \gamma(t)\in  E_+\times (0,\ell)\}) 
        = \int _{K_\ell^+\times (0,\ell)} \chi_+ (\gamma (t)) d \nu^+ \otimes \lambda^1|_{(0,\ell)}(\gamma,t)   \\
        &\geq \int _{K_\ell^+} \int _{[0,\ell]} (\chi_+ \circ \gamma) (t) \dot\gamma_{n+1} (t) d\lambda^1(t) d\nu^+ (\gamma) 
       =\int _{K_\ell^+}\< [\gamma], (0,\ldots,0,\chi_+)\> d \nu^+ (\gamma)        \\
        &= \<\mu^+,(0,\ldots,0,\chi_+)\>
        = \int_{\R^{n+1}} \chi_+\ d (-\Div(\mu) \otimes \lambda^1|_{(0,\ell)}) \\
        &=  \int_{\R^{n+1}} \chi_+  g d(\vnorm{\Div(\mu)} \otimes  \lambda^1|_{(0,\ell)})
        = \vnorm{\mu^+} (E_+\times (0,\ell)) \\
        &= \int _{K_\ell^+} \vnorm{[\gamma]} (E_+\times (0,\ell)) d \nu^+ (\gamma)
        = \int _{K_\ell^+}  \lambda^1(\gamma ^{-1}  (E_+\times (0,\ell)))   d \nu^+ (\gamma)\\
        &=\nu^+ \otimes  \lambda^1|_{(0,\ell)} (\{  (\gamma,t) :  \gamma(t)\in  E_+\times (0,\ell)\}),
    \end{align*}
    where the next-to-last equality comes from (\ref{eom}). 
    It follows that the only inequality above is also an equality and this implies that, for $\nu^+$-almost all $\gamma$, we have
    $\chi_+(\gamma(t))\dot \gamma_{n+1} (t) =\chi_+(\gamma(t))$ for $\lambda^1$-almost all $t\in (0,\ell)$.
    The same arguments show the analogous statement for $E_-$ so that we have proved the claim.
    

   

    Since curves in $K_\ell^+$ are absolutely continuous with $|\dot\gamma|=1$ $\lambda^1$-a.e., we obtain that $\nu^+$-almost all curves are 
    \emph{vertical} when they pass through $S$, more precisely, $\gamma(t)\in E_\pm\times (0,\ell)$ implies
    $\gamma(s)=\gamma(t)\pm (s-t)e_{n+1}$ for the $(n+1)$th unit vector $e_{n+1}$ in $\R^{n+1}$
    and all $s\in J$ where $J\subseteq[t,\ell]$ is the maximal interval containing $t$ with $\gamma(J)\subseteq S$.

    We next define
    \[
   D=\{\gamma\in K_\ell^+: \gamma(0)\in (\R^n\times\{0\})\cup(E_-\times (0,\ell)\},
    \]
  $\alpha(\gamma)=\inf\{t\in [0,\ell]: \gamma(t)\in \R^n\times\{0\}\}$, and $\beta(\gamma)=\sup\{t\in [0,\ell]: \gamma(t)\in \R^n\times\{0\}\}$
  (with $\sup\emptyset=0$ and $\inf\emptyset=\ell$).
    For $\nu^+$ almost every curve $\gamma\in D$ we then have $\{\gamma\in\R^n\times\{0\}\}=[\alpha(\gamma),\beta(\gamma)]$ and
    $\dot\gamma(t)=(0,\ldots,0,\pm1)$ for $t\in [0,\ell]\setminus [\alpha(\gamma),\beta(\gamma)]$.
    
    For $A\in \B^n$, we thus get $[\gamma](A\times \{0\}) = ([\pi\circ\gamma](A),0)$, and since $\nu^+$-almost all curves in  $K_\ell^+\setminus D$
    do not meet the plane $\R^n\times\{0\}$ we obtain
    \begin{align*}
    (\mu,0)(A\times \{0\}) &=\mu^+|_{\R^n\times \{0\}}(A\times\{0\}) = \int_{D}  \<[\gamma],\chi_{A\times\{0\}}\> d\nu^+(\gamma)\\
    &=
    \int_{D}\<([\pi_H\circ\gamma],\chi_A\>,0)d\nu^+(\gamma).
    \end{align*}
    Taking the first $n$ coordinates we thus have
    \[
    \mu=\int_{D}[\pi_H\circ\gamma]d\nu^+(\gamma) =\int_{K_\ell}[\gamma]d\nu(\gamma)
    \]
where $\nu$ is the push forward of $\nu^+|_D$ under the mapping $\gamma\mapsto\tilde\gamma$ defined by
$\tilde\gamma(t)=\pi(\gamma(\max\{\alpha(\gamma),\min\{t,\beta(\gamma)\}\})$. Then $\nu$-almost all curves in $K_\ell$ have values in $\supp(\mu)$ and
satisfy $\Var([\gamma])=L(\gamma)$ because for $\nu^+$-almost all curves $\gamma$ the projections
$\pi\circ\gamma$ have this property.  Finally, we have $\vnorm\mu=\int_{K_\ell}\vnorm{[\gamma]}d\mu(\gamma)$
because of the corresponding property for $\mu^+$ and $\vnorm{\mu}(A)=\vnorm{\mu^+}(A\times \{0\})$.

    \section{The role of the length of the curves}
    
    In applications like theorem \ref{ThHS} or in \cite{FLW} it is enough to have theorem \ref{ThC} for a fixed $\ell$.
    In \cite{Smi}, curves of length $\ell\to\infty$ are used in two situations. In Theorem B of \cite{Smi} Smirnov shows
    a decomposition of divergence free charges into \emph{elementary solenoids} which are Lipschitz curves $\gamma:\R\to\R^n$
    such that the limits 
    \[
      \lim_{s\to\infty}\frac 1{2s} \int_{-s}^s \<\varphi(\gamma(s)),\dot\gamma(s)\>ds
    \]
    exist for all $\varphi\in \DD(\R^n)^n$. In contrast to theorem \ref{ThA} (where the curves of fixed length $\ell$ are hardly ever
    closed and thus have a non-vanishing divergence $\delta_{\gamma(0)}-\delta_{\gamma(\ell)}$) these elementary solenoids are divergence free. 
    Smirnov gives examples of even smooth three-dimensional charges which cannot be decomposed into closed curves.

    The other situation where curves of arbitrary length are used is Smirnov's Theorem C which is a slightly different version of
    theorem \ref{ThC}. Using the methods of the previous section he proves that every charge $\mu$ can be decomposed $\mu=\sigma+\tau$
   with a divergence free part $\sigma$ to which theorem \ref{ThA} applies and a charge $\tau$ which decomposes as
   $\tau=\int_K[\gamma] d\nu(\gamma)$ with $K=\bigcup_{\ell>0} K_\ell$ such that
   \[
   \vnorm\mu=\vnorm\sigma +\vnorm\tau,\, \vnorm\tau=\int_K\vnorm{[\gamma]}d\nu(\gamma), \text{ and }
   \vnorm{\Div(\mu)}=\int_K \vnorm{\Div([\gamma])}d\nu.
   \]
   This decomposition is achieved by a recursive approximation process which starts as $\mu=\mu_1+ \tau_1$ where -- 
    in the situation of the proof of theorem \ref{ThC} -- one can take
   \[
\tau_1= \int_{\tilde D} [\pi\circ\gamma]d\nu^+(\gamma)
\]
with the set $\tilde D=\{\gamma\in D: \gamma(\ell)\in E_+\times (0,\ell)\}$ and $\mu_1=\mu-\tau_1$. In the next step, this
procedure is applied to get $\mu_1=\mu_2+\tau_2$. Smirnov shows that one can choose curve lengths $\ell_k$
in such a way that the charges $\mu_k$ converge (in the total variation norm) to a divergence free charge $\sigma$
and that also the series $\tau=\sum_{k=1}^\infty \tau_k$ converges.



\bibliographystyle{amsalpha}
\bibliography{SmirnovDecompositions}

\providecommand{\bysame}{\leavevmode\hbox to3em{\hrulefill}\thinspace}
\providecommand{\MR}{\relax\ifhmode\unskip\space\fi MR }
\providecommand{\MRhref}[2]{%
  \href{http://www.ams.org/mathscinet-getitem?mr=#1}{#2}
}
\providecommand{\href}[2]{#2}
\begin{thebibliography}{FLW20}

\bibitem[AK00]{AmKi}
Luigi Ambrosio and Bernd Kirchheim, \emph{Currents in metric spaces}, Acta
  Math. \textbf{185} (2000), no.~1, 1--80 (English).

\bibitem[BB04]{BoBr}
Jean Bourgain and Ha{\"{\i}}m Brezis, \emph{New estimates for the {Laplacian},
  the div-curl, and related {Hodge} systems}, C. R., Math., Acad. Sci. Paris
  \textbf{338} (2004), no.~7, 539--543 (English).

\bibitem[DK00]{DuKo}
J.~J. Duistermaat and J.~A.~C. Kolk, \emph{Lie groups}, Universitext, Berlin:
  Springer, 2000 (English).

\bibitem[FLW20]{FLW}
Leonhard Frerick, Laurent Loosveldt, and Jochen Wengenroth, \emph{Continuously
  differentiable functions on compact sets}, Result. Math. \textbf{75} (2020),
  no.~4, 18 (English), Id/No 177.

\bibitem[PS12]{PaSt1}
Emanuele Paolini and Eugene Stepanov, \emph{Decomposition of acyclic normal
  currents in a metric space}, J. Funct. Anal. \textbf{263} (2012), no.~11,
  3358--3390 (English).

\bibitem[PS13]{PaSt2}
\bysame, \emph{Structure of metric cycles and normal one-dimensional currents},
  J. Funct. Anal. \textbf{264} (2013), no.~6, 1269--1295 (English).

\bibitem[Rud87]{Rud}
Walter Rudin, \emph{Real and complex analysis.}, 3rd ed. ed., New York, NY:
  McGraw-Hill, 1987 (English).

\bibitem[San15]{San}
Filippo Santambrogio, \emph{Optimal transport for applied mathematicians.
  {Calculus} of variations, {PDEs}, and modeling}, Prog. Nonlinear Differ. Equ.
  Appl., vol.~87, Cham: Birkh{\"a}user/Springer, 2015 (English).

\bibitem[Sch97]{Sch}
Eric Schechter, \emph{Handbook of analysis and its foundations}, San Diego, CA:
  Academic Press, 1997 (English).

\bibitem[Smi93]{Smi}
S.~K. Smirnov, \emph{Decomposition of solenoidal vector charges into elementary
  solenoids and the structure of normal one-dimensional currents}, St.
  Petersbg. Math. J. \textbf{5} (1993), no.~4, 206--238 (English).

\end{thebibliography}

\end{document}